\DeclareMathOperator{\Ded}{Ded}
\DeclareMathOperator{\BDed}{\mathbf{Ded}}
\newtheorem{theorem}{Theorem}
\newtheorem{definition}[theorem]{Definition}
\newtheorem{lemma}[theorem]{Lemma}
\newtheorem{proposition}[theorem]{Proposition}
\newtheorem{example}[theorem]{Example}
\title{Properties of the connective implication in effect algebras}
\author{Ivan~Chajda and Helmut~L\"anger}
\date{}
\begin{document}
\footnotetext[1]{Support of the research by \"OAD, project CZ~02/2019, and support of the research of the first author by IGA, project P\v rF~2019~015, is gratefully acknowledged.}
\maketitle
\begin{abstract}
Effect algebras form a formal algebraic description of the structure of the so-called effects in a Hilbert space which serves as an event-state space for effects in quantum mechanics. This is why effect algebras are considered as logics of quantum mechanics, more precisely as an algebraic semantics of these logics. Because every productive logic is is equipped with a connective implication, we introduce here such a concept and demonstrate its properties. In particular, we show that this implication is connected with conjunction via a certain ``unsharp'' residuation which is formulated on the basis of a strict unsharp residuated poset. Though this structure is rather complicated, it can be converted back into an effect algebra and hence it is sound. Further, we study the Modus Ponens rule for this implication by means of so-called deductive systems and finally we study the contraposition law. 
\end{abstract}
 
{\bf AMS Subject Classification:} 03G25, 03G12, 03B47, 06A11

{\bf Keywords:} Effect algebra, connective implication, unsharp reasoning, unsharp adjointness, strict unsharp residuated poset, Modus Ponens, deductive system, unsharp contraposition law.

Effect algebras were introduced by D.~J.~Foulis and M.~K.~Bennett (\cite{FB}) in order to axiomatize so-called quantum logic effects in a Hilbert space which serve as an event-state space for quantum mechanics. Hence, effect algebras are considered as a logic of quantum mechanics, see e.g.\ \cite{DV} for details. Effect algebras are partial algebras with one partial binary operation, one unary (induced) operation and two constants. When an effect algebra is considered as a formal propositional logic, we can derive a new partial binary operation $\odot$ as its conjunction. However, a logic is productive if it enables deductive reasoning for which a connective implication is necessary. Of course, such an implication can be introduced in many different ways. However, in substructural logics an implication $\rightarrow$ is considered to be appropriate if it is connected with conjunction via the so-called adjointness, which means that
\[
x\odot y\leq z\text{ if and only if }x\leq y\rightarrow z
\]
provided the conjunction is associative, commutative and monotonous. It means that if $a\odot b\leq c$ then $a$ is considered as a possible candidate for $b\rightarrow c$, i.e.\ $b\rightarrow c$ is ``as good as $a$'', in other words, $a$ is the lower estimation of $b\rightarrow c$.

Such an approach was investigated by the first author and R.~Hala\v s (\cite{CH}) for so-called lattice effect algebras, i.e.\ effect algebras that form a lattice with respect to the induced order. Here we define $x\rightarrow y:=x'+y$. Unfortunately, this adjointness holds here only if both the terms $x\odot y$ and $y\rightarrow z$ are defined which is just the case when $z\leq y$ and $x'\leq y$. This is an essential constraint and, moreover, it does not hold for effect algebras which do not form a lattice with respect to the induced order. Hence, there arise two questions:

(1) How to introduce the connective implication in lattice effect algebras in order to satisfy some kind of adjointness with everywhere defined terms.

(2) How to define implication in non-lattice effect algebras such that a certain kind adjointness is satisfied and all the terms occurring in this adjointness condition are everywhere defined.

The first question was answered by the authors in \cite{CL3} by defining $x\rightarrow y:=x'+(x\wedge y)$. We proved that then
\[
(x\vee y')\odot y\leq y\wedge z\text{ if and only if }x\vee y'\leq y\rightarrow z.
\]
This is a kind of adjointness which we called ``relative'' in \cite{CL1} because in contrast to the above mentioned adjointness, the element $y$ occurs also in the right-hand side of the first inequality and in the left-hand side of the second one.

Unfortunately, the connective implication cannot be introduced in this way for non-lattice effect algebras because the operation $\wedge$ need not be defined. But non-lattice effect algebras are more appropriate for the logic of quantum mechanics as pointed in the paper \cite{FB} and the monograph \cite{DP}. Hence the authors try another construction and define $x\rightarrow y:=x'+L(x,y)$, where $L(x,y)$ means the so-called lower cone of the elements $x$ and $y$. In this approach the result of $x\rightarrow y$ need not be an element of the effect algebra $\mathbf E$ in question but a subset of it. When considering a so-called monotonous effect algebra $\mathbf E$, i.e.\ an effect algebra satisfying the implication
\[
\text{if }A,B\leq x'\text{ and }L(A)\leq U(B)\text{ then }L(x+A)\leq U(x+B)
\]
(the details of this notation are explained below) then $\mathbf E$ satisfies the so-called unsharp adjointness
\[
L(U(x,y')\odot y)\leq UL(y,z)\text{ if and only if }LU(x,y')\leq U(y\rightarrow z),
\]
see \cite{CL3}. The term ``unsharp'' expresses the fact that the results of occuring terms are not necessary elements but subsets.

In the present paper we prove that if $\mathbf E$ is an effect algebra and no additional condition is assumed then for $\odot$ and this implication another version of adjointness holds, i.e.\ which shows that our concept of implication is sound.

Let us firstly recall several necessary concepts from ordered sets (posets) as well as the definition of effect algebra.

Let $(P,\leq)$ be a poset, $a,b\in P$ and $A,B\subseteq P$. We put
\begin{align*}
L(A) & :=\{x\in P\mid x\leq y\text{ for all }y\in A\}, \\
U(A) & :=\{x\in P\mid y\leq x\text{ for all }y\in A\}.
\end{align*}
Instead of $L(\{a\})$, $L(\{a,b\})$, $L(A\cup\{a\})$, $L(A\cup B)$ and $L(U(A))$ we simply write $L(a)$, $L(a,b)$, $L(A,a)$, $L(A,B)$ and $LU(A)$, respectively. Analogously, we proceed in similar cases.

\begin{definition}
An {\em effect algebra} is a partial algebra $\mathbf E=(E,+,{}',0,1)$ of type $(2,1,0,0)$ where $(E,{}',0,1)$ is an algebra and $+$ is a partial operation satisfying the following conditions for all $x,y,z\in E$:
\begin{enumerate}[{\rm(E1)}]
\item $x+y$ is defined if and only if so is $y+x$ and in this case $x+y=y+x$,
\item $(x+y)+z$ is defined if and only if so is $x+(y+z)$ and in this case $(x+y)+z=x+(y+z)$,
\item $x'$ is the unique $u\in E$ with $x+u=1$,
\item if $1+x$ is defined then $x=0$.
\end{enumerate}
On $E$ a binary relation $\leq$ can be defined by
\[
x\leq y\text{ if there exists some }z\in E\text{ with }x+z=y
\]
{\rm(}$x,y\in E${\rm)}. Then $(E,\leq,0,1)$ becomes a bounded poset and $\leq$ is called the {\em induced order} of $\mathbf E$. If $(E,\leq)$ is a lattice then $\mathbf E$ is called a lattice effect algebra.
\end{definition}

\begin{lemma}\label{lem1}
{\rm(}see {\rm\cite{DV}, \cite{FB})} If $(E,+,{}',0,1)$ is an effect algebra, $\leq$ its induced order and $a,b,c\in E$ then
\begin{enumerate}[{\rm(i)}]
\item $a''=a$,
\item $a\leq b$ implies $b'\leq a'$,
\item $a+b$ is defined if and only if $a\leq b'$,
\item if $a\leq b$ and $b+c$ is defined then $a+c$ is defined and $a+c\leq b+c$,
\item if $a\leq b$ then $a+(a+b')'=b$ and $(b'+(b'+a)')'=a$,
\item $a+0=0+a=a$,
\item $0'=1$ and $1'=0$.
\end{enumerate}
\end{lemma}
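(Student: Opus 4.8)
The plan is to derive all seven items directly from the axioms (E1)--(E4) together with the definition of the induced order, proving them in an order that respects their logical dependencies rather than the order in which they are listed. The single most useful tool throughout is the uniqueness clause of (E3): whenever I can exhibit an element whose sum with a given $x$ equals $1$, that element is forced to be $x'$. I would treat (i) and (ii) as the foundation, since the remaining items all lean on them.

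For (i), note that $a+a'=1$ by (E3), so by (E1) also $a'+a=1$; but $a''$ is by definition the unique $u$ with $a'+u=1$, whence $a''=a$. For (ii), from $a\le b$ pick $z$ with $a+z=b$; then $(a+z)+b'=b+b'=1$ is defined, so by associativity (E2) $a+(z+b')=1$, and uniqueness in (E3) forces $z+b'=a'$. Reading this as $b'+z=a'$ via (E1) exhibits $z$ as a witness for $b'\le a'$. Item (iii) then follows from (ii): the direction $a\le b'\Rightarrow a+b$ defined is a regrouping via (E1)/(E2) applied to $b+(a+z)=b+b'=1$, while conversely, if $a+b$ is defined I would set $c:=(a+b)'$, use (E2) to rewrite $a+(b+c)=1$, deduce $b+c=a'$, i.e.\ $b\le a'$, and apply (ii) to get $a=a''\le b'$.

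Items (iv) and (v) are then obtained by carefully tracking the witness $z$ of $a\le b$. For (iv), from $(a+z)+c=b+c$ defined I would regroup by (E2) and (E1) to $(a+c)+z=b+c$, which simultaneously shows $a+c$ is defined and that $z$ witnesses $a+c\le b+c$. For (v), the key observation is that $a\le b$ makes $a+b'$ defined (by (iii), since $a\le b=b''$); writing $d:=a+b'$ and computing $(a+b')+z=b'+(a+z)=b'+b=1$ identifies $z$ with $(a+b')'$ by uniqueness, giving $a+(a+b')'=a+z=b$. The companion identity $(b'+(b'+a)')'=a$ falls out by applying $'$ to the relation $b'+d'=a'$ extracted from $d+d'=1$ and invoking (i).

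The step I expect to be the main obstacle is (vi), the neutrality of $0$, precisely because $0$ is not constrained by any axiom that directly involves addition, so there is no immediate starting point. My plan is to first establish part of (vii): since $1+1'=1$ is defined, (E1) makes $1'+1$ defined, and (E4) then forces $1'=0$; combined with (i) this yields $0'=1''=1$ as well. Having $1'=0$ gives $0+1=1+0=1$, and now for arbitrary $a$ I would apply associativity to $0+(a'+a)=0+1=1$ to obtain $(0+a')+a=1$, so that uniqueness in (E3) forces $0+a'=a'$. Since every element is an orthosupplement by (i), this says $0+b=b$ for all $b$, and commutativity finishes (vi). I would keep an eye on definedness throughout, as each application of (E2) silently requires the relevant subsums to exist, which is exactly what the chosen regroupings are designed to guarantee.
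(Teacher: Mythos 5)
Your proof is correct. Note that the paper itself gives no argument for this lemma at all---it is stated with a pointer to the references \cite{DV} and \cite{FB}---so there is nothing internal to compare against; what you have written is the standard self-contained derivation from (E1)--(E4). The logical skeleton is sound: (i) and (ii) follow from the uniqueness clause of (E3) exactly as you say; in (iii) and (iv) the regroupings you gesture at do exist (e.g.\ for the forward direction of (iii) one passes from $(a+z)+b$ to $(z+a)+b$ by (E1) and then to $z+(a+b)$ by (E2), which is what forces $a+b$ to be defined); and your treatment of (vi) is the right way around the genuine subtlety that no axiom mentions $0$ additively---first extracting $1'=0$ from (E4), then using $(0+a')+a=1$ and uniqueness to get $0+a'=a'$, and finally using surjectivity of $'$ (from (i)) to conclude for all elements. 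The only places where a referee might ask for more are the elided (E1)/(E2) definedness bookkeeping steps you explicitly flag at the end, and each of these is routine and goes through.
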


Let $(E,+,{}',0,1)$ be an effect algebra, $A,B\subseteq E$ and $a\in E$. Then $A\leq B$ means $x\leq y$ for all $x\in A$ and $y\in B$. Instead of $A\leq\{a\}$ and $\{a\}\leq A$ we write shortly $A\leq a$ and $a\leq A$, respectively. Moreover, we put $A':=\{x'\mid x\in A\}$ and in case $A\leq x'$ we put $x+A:=\{x+y\mid y\in A\}$. We can extend the operation $+$ also for subsets $A$ and $B$ of $E$ as follows: If $A\leq B'$ then $A+B:=\{x+y\mid x\in A,y\in B\}$.

\begin{lemma}\label{lem2}
Let $(E,+,{}',0,1)$ be an effect algebra and $a,b\in E$. Then
\begin{align*}
L(a,b) & =(a'+(a'+L(a,b))')', \\
U(a,b) & =a+(a+(U(a,b))')'.
\end{align*}
\end{lemma}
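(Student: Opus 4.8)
The plan is to recognise that both equalities are pointwise statements disguised as set identities. Since, for a subset $S$ with $S\leq a$, the expression $a'+S$ denotes $\{a'+s\mid s\in S\}$ and $S'$ denotes $\{s'\mid s\in S\}$, the right-hand side $(a'+(a'+L(a,b))')'$ is simply the image of $L(a,b)$ under the unary map $y\mapsto(a'+(a'+y)')'$, and likewise $a+(a+(U(a,b))')'$ is the image of $U(a,b)$ under $y\mapsto a+(a+y')'$. Hence it is enough to prove the two \emph{elementwise} identities $(a'+(a'+y)')'=y$ for every $y\in L(a,b)$ and $a+(a+y')'=y$ for every $y\in U(a,b)$.

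First I would check that every partial sum occurring in these expressions is defined, which is what legitimises reading the set operations componentwise. For $y\in L(a,b)$ we have $y\leq a$, so $a'+y$ is defined by Lemma~\ref{lem1}(iii); moreover $a'\leq a'+y$ (add $0\leq y$ using Lemma~\ref{lem1}(iv),(vi)), whence $(a'+y)'\leq a$ by Lemma~\ref{lem1}(ii), so $a'+(a'+y)'$ is defined as well. The dual chain of inequalities, using $a\leq y$ for $y\in U(a,b)$, shows that $a+y'$ and then $a+(a+y')'$ are defined. With definedness in hand, the two elementwise identities are exactly the two conclusions of Lemma~\ref{lem1}(v): taking its first identity $a+(a+b')'=b$ with $b:=y$ (and $a\leq y$) yields $a+(a+y')'=y$, while taking its second identity $(b'+(b'+a)')'=a$ with its roles instantiated as $b:=a$ and $a:=y$ (and $y\leq a$) yields $(a'+(a'+y)')'=y$.

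Applying these two fixed-point identities to every element of $L(a,b)$ and $U(a,b)$ respectively gives the claimed set equalities at once. The only genuine obstacle is the definedness bookkeeping of the second paragraph: because $+$ is only partial, one must be sure that each nested sum is legal on the whole cone before the componentwise computation is justified, and that the relevant comparabilities ($y\leq a$, respectively $a\leq y$) propagate correctly through the complementations via Lemma~\ref{lem1}(ii). Once this is secured, no computation beyond the direct substitution into Lemma~\ref{lem1}(v) is required.
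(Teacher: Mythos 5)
Your proof is correct and follows essentially the same route as the paper: both reduce the set identities to the elementwise identities of Lemma~\ref{lem1}(v), using $y\leq a$ for $y\in L(a,b)$ and $a\leq y$ for $y\in U(a,b)$. The only difference is that you spell out the definedness bookkeeping for the nested partial sums, which the paper leaves implicit.
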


\begin{proof}
If $c\in L(a,b)$ and $d\in U(a,b)$ then because of $c\leq a\leq d$ we have by (v) of Lemma~\ref{lem1}
\begin{align*}
c & =(a'+(a'+c)')', \\
d & =a+(a+d')'.
\end{align*}
\end{proof}

A {\em partial commutative monoid} is a partial algebra $(A,\odot,1)$ of type $(2,0)$ where $\odot$ is a partial operation satisfying the following conditions for all $x,y,z\in A$:
\begin{itemize}
\item $(x\odot y)\odot z$ is defined if and only if so is $x\odot(y\odot z)$ and in this case $(x\odot y)\odot z=x\odot(y\odot z)$,
\item $x\odot1\approx1\odot x\approx x$,
\item $x\odot y$ is defined if and only if so is $y\odot x$ and in this case $x\odot y=y\odot x$.
\end{itemize}

Let $\mathbf E=(E,+,{}',0,1)$ be an effect algebra. For elements $x,y$ and subsets $A,B$ of $E$ we define
\begin{align*}
      x\cdot y & :=(x'+y')'\text{ if and only if }x'\leq y, \\
x\rightarrow y & :=x'+L(x,y), \\
x\rightarrow B & :=x'+L(x,B), \\
A\rightarrow y & :=A'+L(A,y), \\
A\rightarrow B & :=A'+L(A,B).
\end{align*}

We list several properties of the connective implication in effect algebras.

\begin{theorem}\label{th2}
Let $(E,+,{}',0,1)$ be an effect algebra with induced order $\leq$ and $a,b,c\in E$. Then
\begin{enumerate}[{\rm(i)}]
\item $a\rightarrow b\subseteq U(a')$,
\item if $a\leq b$ then $a\rightarrow b=U(a')$,
\item if $b\leq a$ then $a\rightarrow b=[a',a'+b]$,
\item $0\rightarrow b=\{1\}$,
\item $a\rightarrow0=\{a'\}$,
\item $1\rightarrow b=L(b)$,
\item $L(a\rightarrow b)=L(a')$,
\item $a\cdot(a\rightarrow b)=L(a,b)$,
\item if $b\leq c$ then $a\rightarrow b\subseteq a\rightarrow c$,
\item $a\rightarrow b=(a\cdot(L(a,b))')'=(a\cdot U(a',b'))'$,
\item $a\rightarrow b\leq U(a',c')$ if and only if $a\rightarrow c\leq U(a',b')$,
\item if $\mathbf E$ is a lattice effect algebra then $a\rightarrow(a\wedge b)=a\rightarrow b$.
\end{enumerate}
\end{theorem}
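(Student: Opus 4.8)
The plan is to treat the twelve items as computations with the set-valued operations, constantly using the description $a\rightarrow b=\{a'+z\mid z\in L(a,b)\}$, which is legitimate because every $z\in L(a,b)$ satisfies $z\leq a=(a')'$, so that $a'+z$ is defined by (iii) of Lemma~\ref{lem1}. Two structural observations will carry most of the weight: first, since $0\in L(a,b)$, the element $a'=a'+0$ lies in $a\rightarrow b$; second, for every $z\in L(a,b)$ the sum $a'+z$ is an upper bound of $a'$ by the very definition of the induced order. Together these say that $a'$ is the least element of $a\rightarrow b$, which immediately gives (i) (everything lies in $U(a')$), (vii) (the lower cone of a set with least element $a'$ is $L(a')$), and the ``$\subseteq$'' halves of (ii) and (iii).

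For the purely computational items I would simply evaluate $L(a,b)$ under the stated hypotheses and push it through $a'+(-)$. Item (ix) is monotonicity of $b\mapsto L(a,b)$ followed by monotonicity of $a'+(-)$; items (iv), (v), (vi) reduce to $L(0,b)=L(0)=\{0\}$, $L(a,0)=\{0\}$ and $L(1,b)=L(b)$, combined with (vi) of Lemma~\ref{lem1}; item (xii) is the lattice identity $L(a,b)=L(a\wedge b)=L(a,a\wedge b)$. Item (x) unfolds the definitions of $\cdot$ and of set complementation: using $(L(a,b))'=U(a',b')$ and the involution $a''=a$ from (i) of Lemma~\ref{lem1}, both right-hand sides collapse back to $a'+L(a,b)$.

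The items that genuinely use the reconstruction identities in (v) of Lemma~\ref{lem1} are the ``$\supseteq$'' halves of (ii) and (iii) together with item (viii). For (ii) and (iii), given a target element $w$ with $a'\leq w$ I would set $z:=(a'+w')'$, so that $a'+z=w$ by (v); the remaining work is to check $z\in L(a,b)$, which for (iii) means verifying $z\leq b$ by writing $b=(a'+(a'+b)')'$ (again (v), since $b\leq a$) and applying order-reversal together with monotonicity to $w\leq a'+b$. Item (viii) is the cleanest application: expanding $a\cdot(a\rightarrow b)$ yields the elements $(a'+(a'+z)')'$ for $z\in L(a,b)$, and each of these equals $z$ by the second identity of (v), so the whole set is exactly $L(a,b)$.

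The real obstacle is item (xi), since it is an equivalence rather than a computation and both sides mix a set with a cone. My plan is to reduce each inequality to a single symmetric definedness condition. Reading off $U(a',c')=\{v'\mid v\in L(a,c)\}$, the inequality $a\rightarrow b\leq U(a',c')$ says exactly that $a'+z\leq v'$ for all $z\in L(a,b)$ and all $v\in L(a,c)$. By (iii) of Lemma~\ref{lem1} this holds iff $(a'+z)+v$ is defined, and by associativity (E2) the latter is defined iff $(a'+v)+z$ is, i.e.\ iff $a'+v\leq z'$. Thus both $a\rightarrow b\leq U(a',c')$ and $a\rightarrow c\leq U(a',b')$ are equivalent to the single condition that $a'+z+v$ be defined for all $z\in L(a,b)$ and $v\in L(a,c)$, which is manifestly symmetric in $b$ and $c$; the equivalence follows. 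The one point demanding care is the passage $a'+z\leq v'\iff(a'+z)+v\text{ defined}$, where I must make sure the domain condition of $+$ from (iii) of Lemma~\ref{lem1} is applied in the correct direction.
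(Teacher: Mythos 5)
Your proposal is correct throughout; for items (i)--(x) and (xii) it follows essentially the same computations as the paper (the only cosmetic differences being that you prove (iv)--(vi) directly from $L(0,b)=L(a,0)=\{0\}$ and $L(1,b)=L(b)$ rather than as corollaries of (ii) and (iii), and that you invoke (v) of Lemma~\ref{lem1} directly in (viii) where the paper routes through Lemma~\ref{lem2}, which is itself just a restatement of that identity). The genuine divergence is item (xi). The paper deduces it from (viii): assuming $a\rightarrow b\leq U(a',c')$ it computes $a\rightarrow c=a'+(U(a',c'))'\leq a'+(a\rightarrow b)'=(a\cdot(a\rightarrow b))'=(L(a,b))'=U(a',b')$, and then swaps $b$ and $c$. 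You instead unfold both inequalities elementwise, using $U(a',c')=(L(a,c))'$ and (iii) of Lemma~\ref{lem1} to translate $a'+z\leq v'$ into definedness of $(a'+z)+v$, and then observe via (E1) and (E2) that this definedness condition is symmetric in $z\in L(a,b)$ and $v\in L(a,c)$; hence both sides of the equivalence express one and the same condition. This is a legitimate and arguably more transparent argument: it makes the hidden symmetry of (xi) explicit and needs only the axioms and (iii) of Lemma~\ref{lem1}, whereas the paper's version has the advantage of exhibiting (xi) as a formal consequence of the divisibility identity (viii), which is the form in which it is reused in the proof of Theorem~\ref{th1}. The one step you flagged as delicate --- that $(a'+z)+v$ is defined iff $a'+z\leq v'$ --- is indeed applied in the correct direction, and the passage from $(a'+z)+v$ to $(a'+v)+z$ is covered by (E1) together with (E2).
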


\begin{proof}
\
\begin{enumerate}[(i)]
\item We have $a\rightarrow b=a'+L(a,b)\subseteq U(a')$.
\item Assume $a\leq b$. According to (i), $a\rightarrow b\subseteq U(a')$. Conversely, if $c\in U(a')$ then $c=a'+(a'+c')'\in a'+L(a)$ since $a'\leq a'+c'$.
\item Assume $b\leq a$. Then $a\rightarrow b=a'+L(a,b)=a'+L(b)\subseteq[a',a'+b]$. Conversely, if $c\in[a',a'+b]$ then everyone of the following assertions implies the next one:
\begin{align*}
         c & \leq a'+b, \\
   (a'+b)' & \leq c', \\
a'+(a'+b)' & \leq a'+c', \\
  (a'+c')' & \leq(a'+(a'+b)')', \\
  (a'+c')' & \leq(b')', \\
  (a'+c')' & \leq b
\end{align*}
and hence $c=a'+(a'+c')'\in a'+L(b)=a\rightarrow b$.
\item According to (ii) we have $0\rightarrow b=U(0')=\{1\}$.
\item According to (iii) we have $a\rightarrow0=[a',a'+0]=\{a'\}$.
\item According to (iii) we have $1\rightarrow b=[1',1'+b]=L(b)$.
\item We have $L(a\rightarrow b)=L(a'+L(a,b))=L(a'+0)=L(a')$.
\item According to Lemma~\ref{lem2} we have
\begin{align*}
a\cdot(a\rightarrow b) & =(a'+(a\rightarrow b)')'=(a'+(a'+L(a,b))')'=(a'+(a'+(U(a',b'))')')'= \\
                       & =(U(a',b'))'=L(a,b).
\end{align*}
\item If $b\leq c$ then $a\rightarrow b=a'+L(a,b)\subseteq a'+L(a,c)=a\rightarrow c$.
\item We have $a\rightarrow b=a'+L(a,b)=(a\cdot(L(a,b))')'=(a\cdot U(a',b'))'$.
\item If $a\rightarrow b\leq U(a',c')$ then according to (viii) we have
\begin{align*}
a\rightarrow c & =a'+L(a,c)=a'+(U(a',c'))'\leq a'+(a\rightarrow b)'=(a\cdot(a\rightarrow b))'=(L(a,b))'= \\
               & =U(a',b').
\end{align*}
The converse implication follows by interchanging $b$ and $c$.
\item If $\mathbf E$ is a lattice effect algebra then
\[
a\rightarrow(a\wedge b)=a'+L(a,a\wedge b)=a'+L(a\wedge b)=a'+L(a,b)=a\rightarrow b.
\]
\end{enumerate}
\end{proof}

Further properties of the connective implication are collected in Theorem~\ref{th4} below. Now, some of the arguments of the implication operation are subsets of the corresponding effect algebra instead of elements.

\begin{theorem}\label{th4}
Let $\mathbf E=(E,+,{}',0,1)$ be an effect algebra and $a,b,c\in E$. Then
\begin{enumerate}[{\rm(i)}]
\item $(a\rightarrow0)\rightarrow0=\{a\}$ {\rm(}{\em double negation law}{\rm)},
\item $a\rightarrow(b\rightarrow c)=a\rightarrow b'$,
\item $a\rightarrow U(b)=a\rightarrow b$,
\item $U(a)\rightarrow b=U(a)\rightarrow U(b)=U(a',b')\rightarrow a'=L(a')+L(a,b)$,
\item $a\rightarrow L(a,b)=\{a'\}$,
\item $a\rightarrow U(a,b)=a'+L(a)$,
\item $U(a\rightarrow U(a,b))=\{1\}$.
\end{enumerate}
\end{theorem}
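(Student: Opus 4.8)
The plan is to evaluate each expression directly from the defining formulas for $\rightarrow$ and $\cdot$, reducing everything to manipulations of the cones $L$ and $U$ together with complementation ${}'$. Throughout I would lean on the following ``polarity'' identities, all immediate from Lemma~\ref{lem1}(i),(ii): for elements $a,b$ one has $U(a)'=L(a')$, $L(a)'=U(a')$, $U(a,b)'=L(a',b')$ and $L(a,b)'=U(a',b')$; moreover $L(X,Y)=L(X)\cap L(Y)$, while $L(U(a))=L(a)$ and, more generally, $L(a)\subseteq L(U(S))$ whenever $a$ is a lower bound of $S$ (dually for $U$). I would also invoke Theorem~\ref{th2}, in particular $a\rightarrow0=\{a'\}$ from (v) and $L(a\rightarrow b)=L(a')$ from (vii), and I would keep checking that each partial sum is defined via Lemma~\ref{lem1}(iii).

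For (i), Theorem~\ref{th2}(v) gives $a\rightarrow0=\{a'\}$, and applying the same identity to the single element $a'$ yields $\{a'\}\rightarrow0=\{a''\}=\{a\}$ by Lemma~\ref{lem1}(i). For (ii) I would expand $a\rightarrow(b\rightarrow c)=a'+L(a,b\rightarrow c)=a'+\bigl(L(a)\cap L(b\rightarrow c)\bigr)$ and use Theorem~\ref{th2}(vii) to replace $L(b\rightarrow c)$ by $L(b')$, so the right-hand side collapses to $a'+L(a,b')=a\rightarrow b'$. Part (iii) is the remark that $b\in U(b)$ is a lower bound of $U(b)$, whence $L(U(b))=L(b)$ and therefore $L(a,U(b))=L(a,b)$, giving $a\rightarrow U(b)=a\rightarrow b$.

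The heart of the argument is (iv), where three differently-shaped expressions must all be identified with $L(a')+L(a,b)$. Using $U(a)'=L(a')$ and $L(U(a))=L(a)$ one gets $U(a)\rightarrow b=L(a')+L(U(a),b)=L(a')+L(a,b)$, and $U(a)\rightarrow U(b)$ is handled identically since $L(U(a),U(b))=L(a,b)$. For the third expression I would compute $U(a',b')'=L(a,b)$ and observe that $a'$ is a lower bound of $U(a',b')$, so $L(a')\subseteq L(U(a',b'))$ and hence $L(U(a',b'),a')=L(a')$; together with commutativity of the extended sum (which holds because $A\leq B'$ is equivalent to $B\leq A'$) this gives $U(a',b')\rightarrow a'=L(a,b)+L(a')=L(a')+L(a,b)$. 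Definedness of these sums is automatic: every summand drawn from $L(a')$ lies below $a'$ and every summand from $L(a,b)$ lies below $a$, so by Lemma~\ref{lem1}(iii) each pairwise sum exists. I expect the bookkeeping of which cone collapses to which --- in particular recognising $L(U(a',b'),a')=L(a')$ rather than $L(a',b')$ --- to be the one genuinely error-prone point.

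The remaining parts are short consequences of the same principle. For (v), since $0\in L(a,b)$ we have $L(L(a,b))=\{0\}$, so $L(a,L(a,b))=\{0\}$ and $a\rightarrow L(a,b)=a'+\{0\}=\{a'\}$. For (vi), $a$ is a lower bound of $U(a,b)$, hence $L(a)\subseteq L(U(a,b))$ and $L(a,U(a,b))=L(a)$, yielding $a\rightarrow U(a,b)=a'+L(a)$. Finally (vii) follows from (vi): the set $a'+L(a)$ contains $a'+a=1$ (using (E3) and $a\in L(a)$), and since $1$ is the top element the only upper bound of a set containing it is $1$ itself, so $U(a\rightarrow U(a,b))=U(a'+L(a))=\{1\}$.
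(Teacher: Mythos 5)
Your proposal is correct and follows essentially the same route as the paper's own proof: each part is evaluated directly from the definition $A\rightarrow B=A'+L(A,B)$ using the cone identities $U(a)'=L(a')$, $L(U(a),b)=L(a,b)$, $L(a,L(a,b))=\{0\}$ and $L(a,U(a,b))=L(a)$, together with Theorem~\ref{th2}(v),(vii). Your extra checks of definedness of the partial sums via Lemma~\ref{lem1}(iii) are a harmless (and welcome) addition that the paper leaves implicit.
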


\begin{proof}
\
\begin{enumerate}[(i)]
\item According to (v) we have $(a\rightarrow0)\rightarrow0=\{a'\}\rightarrow0=\{a''\}+L(\{a'\},0)=\{a\}$.
\item We have $a\rightarrow(b\rightarrow c)=a'+L(a,b'+L(b,c))=a'+L(a,b')=a\rightarrow b'$.
\item We have $a\rightarrow U(b)=a'+L(a,U(b))=a'+L(a,b)=a\rightarrow b$.
\item We have
\begin{align*}
     U(a)\rightarrow b & =(U(a))'+L(U(a),b)=L(a')+L(a,b), \\
  U(a)\rightarrow U(b) & =(U(a))'+L(U(a),U(b))=L(a')+L(a,b), \\
U(a',b')\rightarrow a' & =(U(a',b'))'+L(U(a',b'),a')=L(a,b)+L(a')=L(a')+L(a,b).
\end{align*}
\item We have $a\rightarrow L(a,b)=a'+L(a,L(a,b))=\{a'\}$.
\item We have $a\rightarrow U(a,b)=a'+L(a,U(a,b))=a'+L(a)$.
\item According to (vi) we have $U(a\rightarrow U(a,b))=U(a'+L(a))=\{1\}$.
\end{enumerate}
\end{proof}

Now we are ready to define our main concept.

\begin{definition}
A {\em strict unsharp residuated poset} is an ordered seventuple $\mathbf C=(C,\leq,\odot,\rightarrow,{}',0,1)$ where $\rightarrow:C^2\rightarrow2^C$ and the following hold for all $x,y,z\in C$:
\begin{enumerate}[{\rm(C1)}]
\item $(C,\leq,{}',0,1)$ is a bounded poset with an antitone involution,
\item $(C,\odot,1)$ is a partial commutative monoid where $x\odot y$ is defined if and only if $x'\leq y$. Moreover, $z'\leq x\leq y$ implies $x\odot z\leq y\odot z$, and $x\leq y$ implies $x=y\odot(y\odot x')'$,
\item $U(x,y')\odot y\subseteq UL(y,z)$ if and only if $U(x,y')\subseteq U(y\rightarrow z)$,
\item $x\rightarrow0=\{x'\}$.
\end{enumerate}
The strict unsharp residuated poset $\mathbf C$ is called {\em divisible} if
\begin{enumerate}
\item[{\rm(C5)}] $x\odot(x\rightarrow y)=L(x,y)$
\end{enumerate}
for all $x,y\in C$.
\end{definition}

Condition {\rm(C3)} is called {\em unsharp adjointness}. Note that because of $y'\leq U(x,y')$ the expression $U(x,y')\odot y$ is everywhere defined. Since for two subsets $A,B$ of a poset $(P,\leq)$, $A\subseteq U(B)$ is equivalent to $A\geq B$, we have that unsharp adjointness (C3) is equivalent to the following {\em dual unsharp adjointness}:
\begin{enumerate}
\item[(C3')] $U(x,y')\odot y\geq L(y,z)$ if and only if $U(x,y')\geq y\rightarrow z$.
\end{enumerate}
The adjective ``unsharp'' means that the value of the implication is not an element but a subset of $C$. The adjective ``strict'' refers to the fact that the operation $x\odot y$ is defined strictly only in case $x'\leq y$.

The next theorem shows that every effect algebra can be organized into a strict unsharp residuated poset when the implication $\rightarrow$ is defined as above.

\begin{theorem}\label{th1}
Let $\mathbf E=(E,+,{}',0,1)$ be an effect algebra with induced order $\leq$ and put
\begin{align*}
      x\odot y & :=(x'+y')'\text{ if and only if }x'\leq y, \\
x\rightarrow y & :=x'+L(x,y)
\end{align*}
{\rm(}$x,y\in E${\rm)}. Then $\mathbb C(\mathbf E):=(E,\leq,\odot,\rightarrow,{}',0,1)$ is a divisible strict unsharp residuated poset.
\end{theorem}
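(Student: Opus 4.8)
The plan is to verify conditions (C1)--(C5) for $\mathbb C(\mathbf E)$. Condition (C1) is immediate: the Definition of effect algebra already states that the induced order makes $(E,\leq,0,1)$ a bounded poset, and Lemma~\ref{lem1}(i),(ii) say precisely that $'$ is an antitone involution. Moreover, since the operation $\odot$ coincides with the operation $\cdot$ introduced before Theorem~\ref{th2}, conditions (C4) and (C5) are nothing but Theorem~\ref{th2}(v) and Theorem~\ref{th2}(viii), respectively, so they require no further work.

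For (C2) I would first note that $x\odot y=(x'+y')'$ is defined exactly when $x'+y'$ is, i.e.\ when $x'\leq y$ by Lemma~\ref{lem1}(iii), which matches the required domain; commutativity (including the agreement of domains) follows from (E1), the unit law from $x\odot1=(x'+1')'=(x'+0)'=x$ using Lemma~\ref{lem1}(vi),(vii),(i), and associativity (again with matching domains) by applying $'$ to the associativity of $+$ for the triple $x',y',z'$, that is, directly from (E2). The monotonicity clause $z'\leq x\leq y\Rightarrow x\odot z\leq y\odot z$ reduces, via the antitone involution, to $y'+z'\leq x'+z'$, which is Lemma~\ref{lem1}(iv) applied to $y'\leq x'$. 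Finally, for $x\leq y$ the identity $x=y\odot(y\odot x')'$ unwinds: one computes $(y\odot x')'=y'+x$, so that $y\odot(y\odot x')'=(y'+(y'+x)')'$, which equals $x$ by the second equation of Lemma~\ref{lem1}(v).

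The main work is (C3). Using the observation recorded in the paper that $A\subseteq U(B)$ is equivalent to $A\geq B$, I would replace (C3) by the equivalent dual form
\[
U(x,y')\odot y\geq L(y,z)\quad\text{if and only if}\quad U(x,y')\geq y'+L(y,z),
\]
recalling that $y\rightarrow z=y'+L(y,z)$. Since both sides are statements quantified universally over $u\in U(x,y')$ and $w\in L(y,z)$, it suffices to establish the pointwise equivalence: whenever $u\geq y'$ and $w\leq y$, one has $u\odot y\geq w$ if and only if $u\geq y'+w$. Here $u\odot y=(u'+y')'$ is defined because $u'\leq y$, and $y'+w$ is defined because $w\leq y$. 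By the antitone involution, $u\odot y\geq w$ is equivalent to $u'+y'\leq w'$, and $u\geq y'+w$ is equivalent to $u'\leq(y'+w)'$. Now Lemma~\ref{lem1}(iii) turns $u'+y'\leq w'$ into definedness of $(u'+y')+w$ and $u'\leq(y'+w)'$ into definedness of $u'+(y'+w)$, and these two definedness conditions coincide by the associativity axiom (E2). This proves the pointwise equivalence, and quantifying over $u\in U(x,y')$ and $w\in L(y,z)$ yields the displayed dual adjointness, hence (C3).

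I expect (C3) to be the only genuine obstacle, and within it the decisive point is the translation of both inequalities into the definedness of the two bracketings of $u'+y'+w$, where (E2) does the essential work; everything else is a routine consequence of Lemma~\ref{lem1} together with the already established parts (v) and (viii) of Theorem~\ref{th2}.
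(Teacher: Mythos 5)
Your proof is correct, and for the decisive condition (C3) it takes a genuinely different route from the paper. The paper verifies (C3) by chaining set-level equivalences through parts (x) and (xi) of Theorem~\ref{th2}: it rewrites $U(a,b')\odot b$ as $(b\rightarrow a')'$ via (x) and then invokes the symmetry statement (xi), $a\rightarrow b\leq U(a',c')\Leftrightarrow a\rightarrow c\leq U(a',b')$, which itself rests on (viii) and ultimately on Lemma~\ref{lem2}. You instead pass to the dual form (C3') and observe that both sides are universal statements over the same pairs $(u,w)$ with $u\in U(x,y')$, $w\in L(y,z)$, so it suffices to prove the pointwise equivalence $u\odot y\geq w\Leftrightarrow u\geq y'+w$; unfolding both inequalities via the antitone involution and Lemma~\ref{lem1}(iii) turns them into the definedness of the two bracketings $(u'+y')+w$ and $u'+(y'+w)$, which (E2) identifies. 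This is sound (the definedness of $u'+y'$ and of $y'+w$ is guaranteed by $u\geq y'$ and $w\leq y$, so (E2) applies cleanly) and is more elementary and self-contained: it exposes that unsharp adjointness is, at bottom, just associativity of the partial addition. What the paper's route buys is reuse of the already catalogued properties of $\rightarrow$ and, in particular, it isolates (xi) as the exact element-free reformulation of (C3), a fact the paper then records as a separate theorem; your argument does not by itself yield that equivalence. Your treatment of (C1), (C2), (C4) and (C5) matches the paper's, with (C2) worked out in more detail but by the same computations from Lemma~\ref{lem1}.
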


\begin{proof}
Let $a,b,c\in E$.
\begin{enumerate}[(C1)]
\item This is obvious.
\item The first part of (C2) is clear. If $c'\leq a\leq b$ then
\[
a\odot c=(a'+c')'\leq(b'+c')'=b\odot c.
\]
If $a\leq b$ then $b'\leq a'$ and hence
\[
a=(a')'=(b'+(b'+a)')'=b\odot(b\odot a')'.
\]
\item According to (x) and (xi) of Theorem~\ref{th2} the following are equivalent:
\begin{align*}
    U(a,b')\odot b & \subseteq UL(b,c), \\
    b\odot U(b',a) & \subseteq UL(b,c), \\
(b\rightarrow a')' & \subseteq UL(b,c), \\
            L(b,c) & \leq(b\rightarrow a')', \\
   b\rightarrow a' & \leq U(b',c'), \\
    b\rightarrow c & \leq U(b',a), \\
           U(a,b') & \subseteq U(b\rightarrow c).
\end{align*}
\item This follows from (v) of Theorem~\ref{th2}.
\item This follows from (viii) of Theorem~\ref{th2}.
\end{enumerate}
\end{proof}

\begin{theorem}
Let $(E,+,{}',0,1)$ be an effect algebra with induced order $\leq$ and put
\begin{align*}
  x\odot y= xy & :=(x'+y')'\text{ if and only if }x'\leq y, \\
x\rightarrow y & :=x'+L(x,y)
\end{align*}
{\rm(}$x,y\in E${\rm)} then {\rm(C3)} and {\rm(xi)} of Theorem~\ref{th2} are equivalent.
\end{theorem}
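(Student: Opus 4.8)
The plan is to recycle the chain of biconditionals that already appears in the verification of (C3) in the proof of Theorem~\ref{th1}, but now to read it as an assertion about which single link actually carries the content of (xi). Writing $(a,b,c)$ for the three elements, condition (C3) asserts the equivalence of $U(a,b')\odot b\subseteq UL(b,c)$ and $U(a,b')\subseteq U(b\rightarrow c)$, whereas (xi) of Theorem~\ref{th2} asserts the equivalence of $a\rightarrow b\leq U(a',c')$ and $a\rightarrow c\leq U(a',b')$. My goal is to show that, modulo purely structural rewriting valid in every effect algebra, these two biconditionals are one and the same, so that each is derivable from the other.

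First I would establish the \emph{upper segment}: that $U(a,b')\odot b\subseteq UL(b,c)$ is equivalent to $b\rightarrow a'\leq U(b',c')$, unconditionally. This uses commutativity of $\odot$ to write $U(a,b')\odot b=b\odot U(b',a)$, then part (x) of Theorem~\ref{th2} to rewrite $b\odot U(b',a)=(b\rightarrow a')'$, then the poset fact $A\subseteq U(B)\Leftrightarrow B\leq A$ to pass to $L(b,c)\leq(b\rightarrow a')'$, and finally the antitone involution together with $L(b,c)'=U(b',c')$ and $(b\rightarrow a')''=b\rightarrow a'$ to reach $b\rightarrow a'\leq U(b',c')$. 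Dually, I would establish the \emph{lower segment}: that $b\rightarrow c\leq U(b',a)$ is equivalent to $U(a,b')\subseteq U(b\rightarrow c)$, which is exactly the fact $A\subseteq U(B)\Leftrightarrow B\leq A$ applied with $A=U(a,b')$ and $B=b\rightarrow c$ (recalling $U(a,b')=U(b',a)$).

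Both segments are reversible equivalences that hold in an arbitrary effect algebra and do not themselves presuppose (xi). Consequently the outer biconditional (C3) is logically equivalent to the central statement obtained by stripping the two segments, namely that $b\rightarrow a'\leq U(b',c')$ is equivalent to $b\rightarrow c\leq U(b',a)$. Applying Lemma~\ref{lem1}(i), i.e.\ $a''=a$, and relabeling, this is precisely (xi) of Theorem~\ref{th2} read with first argument $b$, second argument $a'$ and third argument $c$. Hence (C3) and (xi) are equivalent.

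The main obstacle is conceptual rather than computational: since both (C3) and (xi) are already established as theorems, the content of the statement is the \emph{derivability} of each from the other using only the structural lemmas, so the care lies in verifying that the upper and lower segments are genuinely bidirectional and invoke nothing beyond the antitone involution, the set identities $L(\cdot)'=U(\cdot')$, the equivalence $A\subseteq U(B)\Leftrightarrow B\leq A$, and part (x) of Theorem~\ref{th2}; in particular one must check that they do not covertly use (xi). The direction (xi) $\Rightarrow$ (C3) is already contained in the proof of Theorem~\ref{th1}; the genuinely new content is reading that same chain backwards to obtain (C3) $\Rightarrow$ (xi), which becomes immediate once the two outer segments are known to be equivalences.
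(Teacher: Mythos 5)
Your proposal is correct and follows essentially the same route as the paper: the paper's proof consists of exactly the two chains of equivalences you describe (using Theorem~\ref{th2}(x), the duality $A\subseteq U(B)\Leftrightarrow B\leq A$, and the antitone involution), invoking the (C3)-instance at one link to derive (xi) and the (xi)-instance at the corresponding link to derive (C3). Your refactoring into two unconditional segments plus a central link is just a cleaner logical bookkeeping of the same computation, and the relabeling $(a,b,c)\mapsto(b,a',c)$ you use is bijective, so the passage between instances and the universally quantified statements is sound.
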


\begin{proof}
Let $a,b,c\in E$. If (C3) holds then the following are equivalent:
\begin{align*}
 a\rightarrow b & \leq U(a',c'), \\
       U(c',a') & \geq a\rightarrow b, \\
       U(c',a') & \subseteq U(a\rightarrow b), \\
U(c',a')\odot a & \subseteq UL(a,b), \\
U(c',a')\odot a & \geq L(a,b), \\
      L(c,a)+a' & \leq U(a',b'), \\
      a'+L(a,c) & \leq U(a',b'), \\
 a\rightarrow c & \leq U(a',b'),
\end{align*}
i.e.\ (xi) of Theorem~\ref{th2} holds. If, conversely, (xi) of Theorem~\ref{th2} holds then the following are equivalent:
\begin{align*}
 U(a,b')\odot b & \subseteq UL(b,c), \\
 U(a,b')\odot b & \geq L(b,c), \\
     L(a',b)+b' & \leq U(b',c'), \\
     b'+L(b,a') & \leq U(b',c'), \\
b\rightarrow a' & \leq U(b',c'), \\
 b\rightarrow c & \leq U(b',a), \\
        U(a,b') & \geq b\rightarrow c, \\
        U(a,b') & \subseteq U(b\rightarrow c),
\end{align*}
i.e.\ (C3) holds.
\end{proof}

Also conversely, every strict unsharp residuated poset $\mathbf C$ can be considered as an effect algebra, in fact we need only the reduct of $\mathbf C$ in the similarity type $\{\leq,\odot,{}',0,1\}$, see the following theorem.

\begin{theorem}
Let $\mathbf C=(C,\leq,\odot,\rightarrow,{}',0,1)$ be a strict unsharp residuated poset and put
\[
x+y:=(x'\odot y')'\text{ if and only if }x\leq y'
\]
{\rm(}$x,y\in C${\rm)}. Then $\mathbb E(\mathbf C):=(C,+,{}',0,1)$ is an effect algebra whose induced order coincides with the order in $\mathbf C$.
\end{theorem}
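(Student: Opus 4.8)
The plan is to verify the four effect-algebra axioms (E1)--(E4) for $+$ and then to show that the order induced by $+$ equals $\leq$. Before addressing the individual axioms I would extract from (C2) two facts that will be used repeatedly. First, whenever $a\odot b$ is defined (i.e.\ $a'\leq b$) one has $a\odot b\leq a$: applying the monotonicity clause of (C2) in the form ``$w'\leq c\leq d$ implies $c\odot w\leq d\odot w$'' with $w:=a$, $c:=b$, $d:=1$ (the hypothesis $w'=a'\leq b=c$ being precisely definedness) gives $b\odot a\leq 1\odot a=a$, and commutativity yields $a\odot b\leq a$. Second, $y\odot y'=0$ for every $y$: since $0\leq y$, the identity ``$x\leq y$ implies $x=y\odot(y\odot x')'$'' of (C2) with $x:=0$ gives $0=y\odot(y\odot 0')'=y\odot(y\odot 1)'=y\odot y'$, using $0'=1$ (which holds because $'$ is an antitone involution on a bounded poset, hence swaps $0$ and $1$).

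Axioms (E1) and (E4) are then immediate. For (E1), $x+y$ is defined iff $x\leq y'$ iff $y\leq x'$ (apply the involution and $y''=y$) iff $y+x$ is defined, and then $x+y=(x'\odot y')'=(y'\odot x')'=y+x$ by commutativity of $\odot$. For (E4), $1+x$ defined means $1\leq x'$, forcing $x'=1$ and hence $x=0$. Axiom (E2) is inherited from associativity of $\odot$: since $(x+y)'=x'\odot y'$, I compute $(x+y)+z=((x'\odot y')\odot z')'$ and $x+(y+z)=(x'\odot(y'\odot z'))'$, and the two definedness domains match because $(x+y)+z$ is defined exactly when $x'\odot y'$ and $(x'\odot y')\odot z'$ are defined, which by the associativity clause of the partial commutative monoid is equivalent to $y'\odot z'$ and $x'\odot(y'\odot z')$ being defined, i.e.\ to $x+(y+z)$ being defined; equality then follows from associativity of $\odot$.

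The heart of the argument is (E3). For existence, $x+x'$ is always defined (it requires $x\leq x''=x$) and $x+x'=(x'\odot x)'=0'=1$, using $x'\odot x=0$ from the second preliminary fact. For uniqueness, suppose $x+u=1$; then $x\leq u'$ and $(x'\odot u')'=1=0'$, so $x'\odot u'=0$. Applying the (C2) identity with $a:=x$, $b:=u'$ gives $x=u'\odot(u'\odot x')'=u'\odot(x'\odot u')'=u'\odot 0'=u'\odot 1=u'$, whence $u=x'$. I expect this to be the main obstacle, since it is the only place where one must simultaneously produce the complement and pin it down uniquely, and it relies essentially on the somewhat opaque cancellation-type identity $x=y\odot(y\odot x')'$ of (C2) rather than on elementary monotonicity.

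It remains to prove that the induced order of $\mathbb E(\mathbf C)$ coincides with $\leq$. If $x+z=y$ for some $z$, then $x\leq z'$ and $y'=x'\odot z'\leq x'$ by the first preliminary fact, so $x\leq y$. Conversely, assume $x\leq y$; I claim $z:=x'\odot y$ is a witness. Since $x'\odot y$ is defined ($x\leq y$) and $x'\odot y\leq x'$ (first preliminary fact), we get $x\leq(x'\odot y)'=z'$, so $x+z$ is defined, and applying the (C2) identity to $y'\leq x'$ yields $y'=x'\odot(x'\odot y)'=x'\odot z'$, hence $x+z=(x'\odot z')'=y$. Thus $x\leq y$ iff $x$ precedes $y$ in the induced order, completing the proof.
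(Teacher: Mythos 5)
Your proposal is correct and handles (E3) in essentially the same way as the paper, deriving $x\odot x'=0$ and the uniqueness of the complement from the cancellation identity $x=y\odot(y\odot x')'$ of (C2). You additionally write out the verifications of (E1), (E2), (E4) and of the coincidence of the induced order with $\leq$ (via the auxiliary fact $a\odot b\leq a$ and the witness $z=x'\odot y$), all of which the paper declares obvious or leaves implicit; these details are correct.
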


\begin{proof}
Let $a,b\in C$. Obviously, (E1), (E2) and (E4) hold.
\begin{enumerate}
\item[(E3)] Since $0\leq a$ we have
\[
a+a'=a'+a=(a\odot a')'=(a\odot(a\odot1)')'=(a\odot(a\odot0')')'=0'=1
\]
according to (C2). Conversely, if $a+b=1$ then $a\leq b'$ and hence $b\leq a'$ which implies
\[
b=a'\odot(a'\odot b')'=a'\odot(a+b)=a'\odot1=a'
\]
again according to (C2).
\end{enumerate}
\end{proof}

Every effect algebra can be reconstructed from its assigned strict unsharp residuated poset as the following theorem shows.

\begin{theorem}
Let $\mathbf E$ be an effect algebra. Then $\mathbb E(\mathbb C(\mathbf E))=\mathbf E$.
\end{theorem}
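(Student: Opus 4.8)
The plan is to verify that the two partial algebras $\mathbb E(\mathbb C(\mathbf E))$ and $\mathbf E$ coincide componentwise. By construction of both $\mathbb C(\cdot)$ and $\mathbb E(\cdot)$, the underlying set $E$, the unary operation $'$, and the constants $0$ and $1$ are literally the same in $\mathbb E(\mathbb C(\mathbf E))$ as in $\mathbf E$; thus the only thing requiring proof is that the partial addition produced by the two-step construction agrees with the original $+$, both in its domain of definition and in its values.

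First I would pin down the domain. Write $\oplus$ for the addition of $\mathbb E(\mathbb C(\mathbf E))$. By the definition of $\mathbb E(\cdot)$ applied to $\mathbb C(\mathbf E)$, the sum $x\oplus y$ is defined if and only if $x\leq y'$, where $\leq$ is the order of $\mathbb C(\mathbf E)$, which by construction is precisely the induced order of $\mathbf E$. On the other hand, (iii) of Lemma~\ref{lem1} says that in $\mathbf E$ the sum $x+y$ is defined exactly when $x\leq y'$. Hence $x\oplus y$ is defined if and only if $x+y$ is defined, so the two partial operations have the same domain.

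Second, on this common domain I would compute the value of $\oplus$ directly from the definitions. Unfolding $\oplus$ gives $x\oplus y=(x'\odot y')'$, and since $\odot$ is the operation of $\mathbb C(\mathbf E)$ we have $x'\odot y'=((x')'+(y')')'$. Applying (i) of Lemma~\ref{lem1} twice yields $(x')'=x$ and $(y')'=y$, so $x'\odot y'=(x+y)'$ and therefore $x\oplus y=((x+y)')'=x+y$, again by (i) of Lemma~\ref{lem1}. This establishes $\oplus=+$ on the common domain and completes the identification.

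There is essentially no genuine obstacle here: the result is a short, purely formal calculation resting on the involution law $a''=a$. The only point deserving care is the bookkeeping of domains---one must invoke that the order of $\mathbb C(\mathbf E)$ is the induced order of $\mathbf E$, and that $\odot$ is defined exactly when the complement of its first argument lies below its second, so that the definedness of $x'\odot y'$ coincides with the condition $x\leq y'$ governing $\oplus$. Once the domains are seen to match, the double application of $a''=a$ closes the argument.
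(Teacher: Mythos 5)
Your proof is correct and follows essentially the same route as the paper: you match the domains via the chain ``$x\oplus y$ defined iff $x\leq y'$ in $\mathbb C(\mathbf E)$ iff $x\leq y'$ in $\mathbf E$ iff $x+y$ defined,'' and then compute $x\oplus y=(x'\odot y')'=(x''+y'')''=x+y$ using $a''=a$, exactly as the paper does.
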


\begin{proof}
Let
\begin{align*}
                      \mathbf E & =(E,+,{}',0,1)\text{ with induced order }\leq, \\
           \mathbb C(\mathbf E) & =(E,\leq,\odot,\rightarrow,{}',0,1), \\
\mathbb E(\mathbb C(\mathbf E)) & =(E,\oplus,{}',0,1)
\end{align*}
and $a,b\in E$. Then the following are equivalent:
\begin{align*}
& a\oplus b\text{ is defined}, \\
& a\leq b'\text{ in }\mathbb C(\mathbf E), \\
& a\leq b'\text{ in }\mathbf E, \\
& a+b\text{ is defined}
\end{align*}
and in this case
\[
a\oplus b=(a'\odot b')'=(a''+b'')''=a+b.
\]
\end{proof}

As mentioned in the introduction, the connective implication enables deductive reasoning in propositional logic. It is usually ruled by means of Modus Ponens. In classical logic Modus Ponens says that if both the proposition $x$ and the implication $x\rightarrow y$ are true then also the assertion $y$ is true. In non-classical logics, Modus Ponens expresses the fact that the true-value of $y$ cannot be less than true-values of $x$ and of $x\rightarrow y$. In order to algebraize this situation often so-called deductive systems are used. If $A$ denotes the set of true-values of a certain logic then a non-empty subset $D$ of $A$ is called a deductive system of $A$ if $x,x\rightarrow y\in D$ imply $y\in D$. In case $D=\{1\}$ we obtain the classical Modus Ponens.

For our unsharp implication we modify the definition of a deductive system as follows.

\begin{definition}
Let $\mathbf E=(E,+,{}',0,1)$ be an effect algebra. A subset $D$ of $E$ is called a {\em deductive system} of $\mathbf E$ if for $x,y\in E$ we have
\begin{itemize}
\item $1\in D$,
\item if $x\in D$ and $x\rightarrow y\subseteq D$ then $y\in D$.
\end{itemize}
Let $\Ded(\mathbf E)$ denote the set of all deductive systems of $\mathbf E$.
\end{definition}

\begin{theorem}\label{th3}
Let $\mathbf E=(E,+,{}',0,1)$ be an effect algebra. Then $E\in\Ded(\mathbf E)$. Moreover, a proper subset $D$ of $E$ containing $1$ is a deductive system of $\mathbf E$ if and only if $D\cap D'=\emptyset$.
\end{theorem}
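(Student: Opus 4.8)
The plan is to reduce everything to one observation: for all $x,y\in E$ the element $x'$ always belongs to $x\rightarrow y$. This follows since $0\leq y$ gives $\{x'\}=x\rightarrow0\subseteq x\rightarrow y$ by (v) and (ix) of Theorem~\ref{th2} (equivalently, $0\in L(x,y)$ and $x'+0=x'$ by Lemma~\ref{lem1}(vi)). This single fact governs when the deductive rule can ever fire. The assertion $E\in\Ded(\mathbf E)$ is then immediate, since $1\in E$ and $y\in E$ holds trivially for every $y$.

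For the equivalence I would fix a proper $D\subseteq E$ with $1\in D$ and treat the two directions separately. Assuming $D\cap D'=\emptyset$, I would show the rule is vacuous: if $x\in D$ and $x\rightarrow y\subseteq D$, then the observation forces $x'\in x\rightarrow y\subseteq D$, whence $x'\in D$ and so $x\in D\cap D'$, a contradiction; thus no instance of the premise occurs and $D$ is a deductive system. For the converse I would argue contrapositively: if $D$ is a deductive system possessing some $x\in D\cap D'$ (so $x,x'\in D$), then applying the rule with $y:=0$ and $x\rightarrow0=\{x'\}\subseteq D$ from (v) of Theorem~\ref{th2} yields $0\in D$, after which applying it with $x:=0$ and $0\rightarrow y=\{1\}\subseteq D$ from (iv) of Theorem~\ref{th2} (together with $1\in D$) yields $y\in D$ for every $y$, i.e.\ $D=E$, contradicting properness.

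The main obstacle is conceptual rather than computational: recognizing that $x'$ is unavoidably a member of $x\rightarrow y$. Once this lever is identified, one direction of the equivalence becomes vacuously true and the other collapses through the cascade $x,x'\in D\Rightarrow 0\in D\Rightarrow D=E$, so no genuinely hard step remains.
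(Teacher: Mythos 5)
Your proposal is correct and follows essentially the same route as the paper: the paper's proof also rests on the observation that $d'=d'+0\in d'+L(d,e)=d\rightarrow e$, makes one direction vacuous by deriving $d\in D\cap D'$, and for the converse runs the same cascade $b,b'\in D\Rightarrow 0\in D\Rightarrow D=E$ via $b\rightarrow0=\{b'\}$ and $0\rightarrow c=\{1\}$. No substantive difference.
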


\begin{proof}
Let $D$ be a proper subset of $E$ containing the element $1$. First assume $D\in\Ded(\mathbf E)$. Suppose $D\cap D'\neq\emptyset$. Then there exists some $a\in D\cap D'$. Hence there exists some $b\in D$ with $b'=a$. Since $b\in D$ and $b\rightarrow0=\{b'\}\subseteq D$ according to Theorem~\ref{th2}, we have $0\in D$. If $c\in E$ then because of $0\in D$ and $0\rightarrow c=\{1\}\subseteq D$ according to Theorem~\ref{th2}, we have $c\in D$. This shows $D=E$, a contradiction. Hence $D\cap D'=\emptyset$. Conversely, assume $D\cap D'=\emptyset$. Assume $d\in D$, $e\in E$ and $d\rightarrow e\subseteq D$. Then $d'=d'+0\in d'+L(d,e)=d\rightarrow e\subseteq D$ and hence $d\in D\cap D'$ contradicting $D\cap D'=\emptyset$. Hence the situation $d\in D$, $e\in E$ and $d\rightarrow e\subseteq D$ is impossible which shows $D\in\Ded(\mathbf E)$ completing the proof of the theorem.
\end{proof}

From this proof we see that if $D$ is a proper subset of $E$ containing the element $1$ then $D\in\Ded\mathbf E$ if and only if there do not exist $a\in D$ and $b\in E$ with $a\rightarrow b\subseteq D$.

\begin{theorem}
Let $\mathbf E=(E,+,{}',0,1)$ be an effect algebra and $M\subseteq E$. Then
\begin{enumerate}[{\rm(i)}]
\item $\BDed(\mathbf E):=(\Ded(\mathbf E),\subseteq)$ is a complete lattice with the smallest element $\{1\}$ and the greatest element $E$.
\item If there exists some $x\in E\setminus\{0,1\}$ with $x'\neq x$ then $\BDed(\mathbf E)$ is atomic and its atoms are exactly the sets of the form $\{1,x\}$ with $x\in E\setminus\{0,1\}$ and $x'\neq x$.
\item The deductive system of $\mathbf E$ generated by $M$ coincides with $M\cup\{1\}$ if both $M\cap M'=\emptyset$ and $0\notin M$ and with $E$ otherwise.
\end{enumerate}
\end{theorem}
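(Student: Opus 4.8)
The entire argument reduces to the criterion established in Theorem~\ref{th3}: a proper subset $D$ of $E$ containing $1$ is a deductive system if and only if $D\cap D'=\emptyset$, together with the fact that $E$ itself is always a deductive system. The plan is to translate each of the three claims into this criterion, invoking only the elementary identities $0'=1$, $1'=0$ and $a''=a$ of Lemma~\ref{lem1} and the computed values $a\rightarrow0=\{a'\}$ and $0\rightarrow c=\{1\}$ from Theorem~\ref{th2}. For (i) I would first check that $\{1\}$ is a deductive system (it is proper and $\{1\}\cap\{0\}=\emptyset$) and note that $1\in D$ for every $D\in\Ded(\mathbf E)$, so $\{1\}$ is the bottom and $E$ the top. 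The lattice structure then follows from closure under arbitrary intersections: if $x\in\bigcap_iD_i$ and $x\rightarrow y\subseteq\bigcap_iD_i$, then $y\in D_i$ for each $i$, so $y\in\bigcap_iD_i$. A family of subsets closed under arbitrary intersection and possessing a largest element is a complete lattice under inclusion, with meet the intersection and join the intersection of all deductive systems containing the union.

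For (ii) I would first verify that each $\{1,x\}$ with $x\in E\setminus\{0,1\}$ and $x'\neq x$ is a deductive system: it is proper since $0\notin\{1,x\}$, and $\{1,x\}\cap\{0,x'\}=\emptyset$ because $x\neq0$, $x\neq x'$ and $1\neq x'$ (the last holding since $x\neq0$). Such a set covers $\{1\}$, hence is an atom. Moreover $E$ is not an atom, since under the hypothesis $\{1\}\subsetneq\{1,x\}\subsetneq E$; thus every atom is a proper deductive system satisfying $D\cap D'=\emptyset$. Conversely, given an atom $D$, I would pick $a\in D\setminus\{1\}$ and show $a\neq0$ and $a'\neq a$: if $a=0$ then $1=0'\in D'$ while $1\in D$, giving $1\in D\cap D'$, and if $a'=a$ then $a\in D\cap D'$, each contradicting $D\cap D'=\emptyset$. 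Then $\{1,a\}$ is a deductive system contained in $D$, so $D=\{1,a\}$ by minimality. The same dichotomy gives atomicity: for a proper $D\supsetneq\{1\}$ any $a\in D\setminus\{1\}$ satisfies $a\neq0$ and $a'\neq a$ as above, so $\{1,a\}\subseteq D$ is an atom below $D$, while for $D=E$ the hypothesis supplies $x$ with $\{1,x\}\subseteq E$ an atom.

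For (iii), when $M\cap M'=\emptyset$ and $0\notin M$ I would show directly that $M\cup\{1\}$ is already a deductive system by expanding $(M\cup\{1\})\cap(M'\cup\{0\})$ into the four intersections $M\cap M'$, $M\cap\{0\}$, $\{1\}\cap M'$ and $\{1\}\cap\{0\}$, each empty under the two hypotheses (using that $1\in M'$ is equivalent to $0\in M$). Since every deductive system containing $M$ must contain $M\cup\{1\}$, this set is the generated one. In the remaining case I would force the generated system to be $E$: if $0\in M$ then $0\rightarrow c=\{1\}$ together with the Modus Ponens rule puts every $c$ into any deductive system containing $M$, whence it equals $E$; and if $a\in M\cap M'$ with $a=b'$, $b\in M$, then $b\rightarrow0=\{b'\}=\{a\}\subseteq M$ forces $0$ into the system, reducing to the previous subcase.

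The intersection computations are routine; the one point requiring genuine care is the case split in (iii) verifying that $M\cup\{1\}$ is a deductive system, where one must confirm that no cross term such as $\{1\}\cap M'$ is nonempty. This is precisely where both hypotheses $0\notin M$ and $M\cap M'=\emptyset$ are needed, and it is the step in which the interplay between the complement ${}'$ and the criterion of Theorem~\ref{th3} is most delicate.
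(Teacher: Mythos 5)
Your proposal is correct and follows essentially the same route as the paper: everything is reduced to the criterion of Theorem~\ref{th3} (a proper $D\ni1$ is a deductive system iff $D\cap D'=\emptyset$), with the values $a\rightarrow0=\{a'\}$ and $0\rightarrow c=\{1\}$ doing the work in part (iii). You merely fill in more detail where the paper is terse (part (i) and the explicit Modus Ponens steps in the ``otherwise'' case of (iii), which the paper instead handles by applying the criterion directly to the generated system $F$).
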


\begin{proof}
Let $a\in E$.
\begin{enumerate}[(i)]
\item This is clear.
\item Assume there exists some $b\in E\setminus\{0,1\}$ with $b'\neq b$. If $a\neq0,1$ and $a'\neq a$ then $\{1,a\}\cap\{1,a\}'=\emptyset$ and hence $\{1,a\}\in\Ded(\mathbf E)$ according to Theorem~\ref{th3}. Now let $D\in\Ded(\mathbf E)$ with $D\neq\{1\}$. If $D=E$ then $\{1,b\}\subseteq D$. Now assume $D\neq E$. If $D=\{0,1\}$ then we would have $D\cap D'\neq\emptyset$ and hence $D=E$ according to Theorem~\ref{th3} contradicting $|E|>2$. Therefore there exists some $c\in D\setminus\{0,1\}$. Now $c'=c$ would imply $c\in D\cap D'$ contradicting $D\in\Ded(\mathbf E)$ according to Theorem~\ref{th3}. Hence $c'\neq c$ and $\{1,c\}\subseteq D$ completing the proof.
\item Without loss of generality assume $|E|>1$. Then $0\neq1$. If $M\cap M'=\emptyset$ and $0\notin M$ then either $M\cup\{1\}=E\in\Ded(\mathbf E)$ or we have both $M\cup\{1\}\neq E$ and $(M\cup\{1\})\cap(M\cup\{1\})'=(M\cup\{1\})\cap(M'\cup\{0\})=\emptyset$ and hence $M\cup\{1\}\in\Ded(\mathbf E)$ according to Theorem~\ref{th3}. Now let $F$ denote the deductive system of $\mathbf E$ generated by $M$. If $M\cap M'\neq\emptyset$ then $F\cap F'\neq\emptyset$ and hence $F=E$ according to Theorem~\ref{th3}. If $0\in M$ then $0,1\in F$ and hence $F\cap F'\neq\emptyset$ which again implies $F=E$.
\end{enumerate}
\end{proof}

Let $\mathbf E=(E,+,{}',0,1)$ be an effect algebra and $a,b\in E$. Then we can consider the set $a\rightarrow b$ as the set of truth-values of this implication. Since $a\rightarrow b=a'+L(a,b)$, the set $a\rightarrow b$ has the least element $a'$. However, the deductive power of implication is given by its highest values because these reveal the reliable truth-values of this deduction. We say that a logic equipped with connectives implication $\rightarrow$ and negation $'$ satisfies the {\em contraposition law} if
\[
x\rightarrow y\approx y'\rightarrow x'.
\]
For our case of unsharp implication, we modify this law in the form
\[
U(x\rightarrow y)\approx U(y'\rightarrow x').
\]
and call it the {\em unsharp contraposition law}.

\begin{proposition}\label{prop1}
Let $\mathbf E=(E,+,{}',0,1)$ be an effect algebra and $a,b\in E$. Then the unsharp contraposition law holds for $a$ and $b$ provided they are comparable.
\end{proposition}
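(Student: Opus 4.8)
The plan is to split on comparability into the two cases $a\le b$ and $b\le a$ and, in each of them, to reduce both $a\rightarrow b$ and $b'\rightarrow a'$ to the explicit forms already computed in Theorem~\ref{th2}, after which the two upper cones can be compared directly. The only genuinely order-theoretic inputs needed are two elementary facts about cones, which I isolate below.

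First I would treat the case $a\le b$. Here Theorem~\ref{th2}(ii) gives $a\rightarrow b=U(a')$. Since $a\le b$ forces $b'\le a'$ by Lemma~\ref{lem1}(ii), the same part (ii) applied to the pair $(b',a')$, together with the double negation $(b')'=b$ from Lemma~\ref{lem1}(i), yields $b'\rightarrow a'=U((b')')=U(b)$. It then remains to observe that $U(U(a'))=U(U(b))=\{1\}$: indeed $1$ lies in every upper cone, so $1\in U(a')$, and any upper bound of $U(a')$ must dominate $1$ and hence equal $1$; the identical argument applies to $U(b)$. Thus both $U(a\rightarrow b)$ and $U(b'\rightarrow a')$ collapse to $\{1\}$, and the law holds in this case.

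Next I would handle the case $b\le a$, which is the genuinely interval-valued situation. Now Theorem~\ref{th2}(iii) gives $a\rightarrow b=[a',a'+b]$, while $b\le a$ gives $a'\le b'$, so part (iii) applied to $(b',a')$ (again using Lemma~\ref{lem1}(i)) gives $b'\rightarrow a'=[b,b+a']$. Passing to upper cones, I would use the elementary fact that the upper cone of an interval equals the upper cone of its greatest element, so that $U([a',a'+b])=U(a'+b)$ and $U([b,b+a'])=U(b+a')$; commutativity (E1) makes $a'+b=b+a'$, so these two cones coincide. Hence both sides again agree.

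I do not expect a serious obstacle: the whole argument is a case analysis feeding into Theorem~\ref{th2}. The only points requiring a little care are the two purely order-theoretic reductions—that the double upper cone of a principal up-set collapses to $\{1\}$, and that the upper cone of an interval is determined by its top endpoint—and making sure the antitone involution is applied correctly, so that the hypotheses $a\le b$ and $b\le a$ are converted into the correct relations $b'\le a'$ and $a'\le b'$ before invoking the appropriate part of Theorem~\ref{th2}.
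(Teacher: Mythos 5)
Your proof is correct and follows essentially the same route as the paper: the same split into the cases $a\le b$ and $b\le a$, the same appeal to parts (ii) and (iii) of Theorem~\ref{th2}, and the same reductions $U(U(a'))=\{1\}$ and $U([a',a'+b])=U(a'+b)$ combined with commutativity of $+$. No gaps.
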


\begin{proof}
If $a\leq b$ then
\[
U(a\rightarrow b)=U(U(a'))=\{1\}=U(U(b''))=U(b'\rightarrow a')
\]
according to (ii) of Theorem~\ref{th2}, and if $b\leq a$ then
\begin{align*}
U(a\rightarrow b) & =U([a',a'+b])=U(a'+b)=U(b+a')=U(b''+a')=U([b'',b''+a'])= \\
                  & =U(b'\rightarrow a').
\end{align*}
according to (iii) of Theorem~\ref{th2}.
\end{proof}

The following example shows that the unsharp contraposition law need not hold for non-comparable elements provided the effect algebra is not lattice-ordered.

\begin{example}
Let $E$ denote the $9$-element set $\{0,a,b,c,d,e,f,g,1\}$ and define $+$ and $'$ as follows:
\[
\begin{array}{c|ccccccccc}
+ & 0 & a & b & c & d & e & f & g & 1 \\
\hline
0 & 0 & a & b & c & d & e & f & g & 1 \\
a & a & - & e & f & - & - & - & 1 & - \\
b & b & e & d & g & f & - & 1 & - & - \\
c & c & f & g & - & - & 1 & - & - & - \\
d & d & - & f & - & 1 & - & - & - & - \\
e & e & - & - & 1 & - & - & - & - & - \\
f & f & - & 1 & - & - & - & - & - & - \\
g & g & 1 & - & - & - & - & - & - & - \\
1 & 1 & - & - & - & - & - & - & - & -
\end{array}
\quad
\begin{array}{c|c}
x & x' \\
\hline
0 & 1 \\
a & g \\
b & f \\
c & e \\
d & d \\
e & c \\
f & b \\
g & a \\
1 & 0
\end{array}
\]
Then $(E,+,{}',0,1)$ is an effect algebra that is not lattice-ordered as the Hasse diagram of $(E,\leq)$ depicted in Fig.~1 shows.

\vspace*{-2mm}

\[
\setlength{\unitlength}{7mm}
\begin{picture}(6,9)
\put(3,2){\circle*{.3}}
\put(1,4){\circle*{.3}}
\put(3,4){\circle*{.3}}
\put(5,4){\circle*{.3}}
\put(3,5){\circle*{.3}}
\put(1,6){\circle*{.3}}
\put(3,6){\circle*{.3}}
\put(5,6){\circle*{.3}}
\put(3,8){\circle*{.3}}
\put(3,2){\line(-1,1)2}
\put(3,2){\line(0,1)6}
\put(3,2){\line(1,1)2}
\put(1,6){\line(0,-1)2}
\put(1,6){\line(1,-1)2}
\put(1,6){\line(1,1)2}
\put(5,6){\line(0,-1)2}
\put(5,6){\line(-1,-1)2}
\put(5,6){\line(-1,1)2}
\put(3,6){\line(-1,-1)2}
\put(3,6){\line(1,-1)2}
\put(2.875,1.25){$0$}
\put(.35,3.85){$a$}
\put(3.4,3.85){$b$}
\put(5.4,3.85){$c$}
\put(3.4,4.85){$d$}
\put(.35,5.85){$e$}
\put(3.4,5.85){$f$}
\put(5.4,5.85){$g$}
\put(2.85,8.4){$1$}
\put(2.2,.3){{\rm Fig.~1}}
\end{picture}
\]

\vspace*{-3mm}

The operation table for $\rightarrow$ looks as follows:
\[
\begin{array}{c|c|c|c|c|c|c|c|c|c}
\rightarrow &   0   &    a    &    b    &    c    &     d     &      e      &        f        &      g      &        1 \\
\hline
      0     & \{1\} &  \{1\}  &  \{1\}  &  \{1\}  &   \{1\}   &    \{1\}    &      \{1\}      &    \{1\}    &      \{1\} \\
\hline
			a     & \{g\} & \{g,1\} &  \{g\}  &  \{g\}  &   \{g\}   &   \{g,1\}   &     \{g,1\}     &    \{g\}    &     \{g,1\} \\
\hline
			b     & \{f\}	&  \{f\}	& \{f,1\}	&  \{f\}	&  \{f,1\}	&   \{f,1\}	  &     \{f,1\}	    &   \{f,1\}	  &     \{f,1\}	\\
\hline
			c     & \{e\} &  \{e\}  &  \{e\}  & \{e,1\} &   \{e\}   &    \{e\}    &     \{e,1\}     &   \{e,1\}   &     \{e,1\} \\
\hline
			d     & \{d\} &  \{d\}  & \{d,f\} &  \{d\}  & \{d,f,1\} &   \{d,f\}   &    \{d,f,1\}    &   \{d,f\}   &    \{d,f,1\} \\
\hline
			e     & \{c\} & \{c,f\} & \{c,g\} &  \{c\}  &  \{c,g\}  & \{c,f,g,1\} &    \{c,f,g\}    &   \{c,g\}   &   \{c,f,g,1\} \\
\hline
			f     & \{b\} & \{b,e\} & \{b,d\} & \{b,g\} & \{b,d,f\} &  \{b,d,e\}  & \{b,d,e,f,g,1\} &  \{b,d,g\}  & \{b,d,e,f,g,1\} \\
\hline
			g     & \{a\} &  \{a\}  & \{a,e\} & \{a,f\} &  \{a,e\}  &   \{a,e\}   &    \{a,e,f\}    & \{a,e,f,1\} &   \{a,e,f,1\} \\
\hline
			1     & \{0\} & \{0,a\} & \{0,b\} & \{0,c\} & \{0,b,d\} & \{0,a,b,e\} & \{0,a,b,c,d,f\} & \{0,b,c,g\} & E
\end{array}
\]
One can see that $a$ and $d$ are incomparable and
\[
U(a\rightarrow d)=U(g)=\{g,1\}\neq\{f,1\}=U(d,f)=U(d\rightarrow g)=U(d'\rightarrow a').
\]
On the contrary, if $x,y\in E$ are comparable then the $x\rightarrow y$ and $y'\rightarrow x'$ need not coincide, but their upper cones are equal in accordance with Proposition~\ref{prop1}. For example, $a\leq e$ and
\[
e\rightarrow a=\{c,f\}\neq\{a,f\}=g\rightarrow c=a'\rightarrow e',
\]
but
\[
U(e\rightarrow a)=U(c,f)=\{f,1\}=U(a,f)=U(a'\rightarrow e').
\]
\end{example}

For lattice effect algebras, every implication $a\rightarrow b$ can be expressed equivalently as $a\rightarrow(a\wedge b)$, see (xii) of Theorem~\ref{th2}, i.e.\ with comparable elements $a$ and $a\wedge b$. Hence, for lattice effect algebras we have this variant of the contraposition law
\[
U(x\rightarrow y)\approx U(x\rightarrow(x\wedge y)\approx U((x\wedge y)'\rightarrow x').
\]
However, we can state the following result.

\begin{proposition}\label{prop2}
Let $\mathbf E=(E,+,{}',0,1)$ be a lattice effect algebra. Then $\mathbf E$ satisfies the unsharp contraposition law if and only if it satisfies the identity
\begin{equation}\label{equ1}
x'+(x\wedge y)\approx y+(x'\wedge y').
\end{equation}
\end{proposition}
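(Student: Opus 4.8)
The plan is to reduce the equality of upper cones demanded by the unsharp contraposition law to an equality of single elements, namely the greatest elements of the sets $x\rightarrow y$ and $y'\rightarrow x'$, and then to recognise these greatest elements as the two sides of \eqref{equ1}.

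First I would record that in a lattice effect algebra $L(a,b)=L(a\wedge b)$, so that $a\rightarrow b=a'+L(a,b)=a'+L(a\wedge b)$ is the image under the map $z\mapsto a'+z$ of the principal ideal $L(a\wedge b)$. Every such sum is defined: since $z\leq a\wedge b\leq a$ gives $a'\leq z'$ by (ii) of Lemma~\ref{lem1}, the sum $a'+z$ is defined by (iii) of Lemma~\ref{lem1}. By the monotonicity of $+$ from (iv) of Lemma~\ref{lem1} we get $a'+z\leq a'+(a\wedge b)$ for all $z\leq a\wedge b$, and since $a'+(a\wedge b)$ itself lies in $a\rightarrow b$ (take $z=a\wedge b$), it is the greatest element of $a\rightarrow b$.

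Next I would invoke the elementary fact that if a subset $A$ of a poset has a greatest element $m$ then $U(A)=U(m)$. This yields $U(x\rightarrow y)=U(x'+(x\wedge y))$ immediately. Applying the same computation to $y'\rightarrow x'=(y')'+L(y',x')=y+L(x'\wedge y')$, where $(y')'=y$ by (i) of Lemma~\ref{lem1}, its greatest element is $y+(x'\wedge y')$ and hence $U(y'\rightarrow x')=U(y+(x'\wedge y'))$.

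Finally, antisymmetry of $\leq$ shows that $U(p)=U(q)$ holds if and only if $p=q$. Combining the two identifications obtained above, the unsharp contraposition law $U(x\rightarrow y)=U(y'\rightarrow x')$ holds for the pair $(x,y)$ if and only if $x'+(x\wedge y)=y+(x'\wedge y')$; quantifying over all $x,y\in E$ then gives exactly the equivalence of the unsharp contraposition law with identity \eqref{equ1}. I expect the only points needing care to be the definedness of the sums and the verification that $a'+(a\wedge b)$ is genuinely the \emph{maximum} of $a\rightarrow b$ rather than merely an upper bound; both follow routinely from Lemma~\ref{lem1}.
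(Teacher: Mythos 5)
Your proof is correct and follows essentially the same route as the paper's: identify $U(x\rightarrow y)$ with $U(x'+(x\wedge y))$ and $U(y'\rightarrow x')$ with $U(y+(x'\wedge y'))$ via the greatest elements of those sets, then conclude by antisymmetry. The paper's version is merely terser, leaving the definedness of the sums and the maximality of $a'+(a\wedge b)$ implicit, whereas you spell them out.
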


\begin{proof}
If $a,b\in E$ then
\begin{align*}
  U(a\rightarrow b) & =U(a'+L(a,b))=U(a'+L(a\wedge b))=U(a'+(a\wedge b)), \\
U(b'\rightarrow a') & =U(b+L(b',a'))=U(b+L(a'\wedge b'))=U(b+(a'\wedge b')).
\end{align*}
\end{proof}

The following example shows that lattice effect algebras need not satisfy identity (\ref{equ1}).

\begin{example}
Let $E$ denote the $6$-element set $\{0,a,a',b,b',1\}$ and define $+$ and $'$ as follows:
\[
\begin{array}{c|cccccc}
+  & 0 & a & a' & b & b' & 1 \\
\hline
0  & 0  & a & a' & b & b' & 1 \\
a  & a  & - & 1  & - & -  & - \\
a' & a' & 1 & -  & - & -  & - \\
b  & b  & - & -  & - & 1  & - \\
b' & b' & - & -  & 1 & -  & - \\
1  & 1  & - & -  & - & -  & -
\end{array}
\quad
\begin{array}{c|c}
x & x' \\
\hline
0  & 1 \\
a  & a' \\
a' & a \\
b  & b' \\
b' & b \\
1  & 0
\end{array}
\]
Then $\mathbf E:=(E,+,{}',0,1)$ is a lattice effect algebra as the Hasse diagram of $(E,\leq)$ depicted in Fig.~2 shows

\vspace*{-7mm}

\[
\setlength{\unitlength}{7mm}
\begin{picture}(8,7)
\put(4,2){\circle*{.3}}
\put(1,4){\circle*{.3}}
\put(3,4){\circle*{.3}}
\put(5,4){\circle*{.3}}
\put(7,4){\circle*{.3}}
\put(4,6){\circle*{.3}}
\put(4,2){\line(-3,2)3}
\put(4,2){\line(-1,2)1}
\put(4,2){\line(1,2)1}
\put(4,2){\line(3,2)3}
\put(4,6){\line(-3,-2)3}
\put(4,6){\line(-1,-2)1}
\put(4,6){\line(1,-2)1}
\put(4,6){\line(3,-2)3}
\put(3.875,1.25){$0$}
\put(.4,3.85){$a$}
\put(2.35,3.85){$a'$}
\put(5.3,3.85){$b$}
\put(7.3,3.85){$b'$}
\put(3.85,6.4){$1$}
\put(3.2,.3){{\rm Fig.~2}}
\end{picture}
\]

\vspace*{-3mm}

and we have
\[
a'+(a\wedge b)=a'+0=a'\neq b=b+0=b+(a'\wedge b').
\]
Thus, according to Proposition~\ref{prop2}, $\mathbf E$ does not satisfy the unsharp contraposition law.
\end{example}

Note that if $(B,\vee,\wedge,{}',0,1)$ is a Boolean algebra and a binary operation $+$ on $B$ is defined by
\[
x+y:=x\vee y\text{ if and only if }x\leq y'
\]
then $(B,+,{}',0,1)$ is a lattice effect algebra satisfying the unsharp contraposition law according to Proposition~\ref{prop2} since we have
\begin{align*}
x'+(x\wedge y) & \approx x'\vee(x\wedge y)\approx(x'\vee x)\wedge(x'\vee y)\approx1\wedge(x'\vee y)\approx x'\vee y\approx y\vee x'\approx \\
               & \approx(y\vee x')\wedge1\approx(y\vee x')\wedge(y\vee y')\approx y\vee(x'\wedge y')\approx y+(x'\wedge y').
\end{align*}

Authors' addresses:

Ivan Chajda \\
Palack\'y University Olomouc \\
Faculty of Science \\
Department of Algebra and Geometry \\
17.\ listopadu 12 \\
771 46 Olomouc \\
Czech Republic \\
ivan.chajda@upol.cz

Helmut L\"anger \\
TU Wien \\
Faculty of Mathematics and Geoinformation \\
Institute of Discrete Mathematics and Geometry \\
Wiedner Hauptstra\ss e 8-10 \\
1040 Vienna \\
Austria, and \\
Palack\'y University Olomouc \\
Faculty of Science \\
Department of Algebra and Geometry \\
17.\ listopadu 12 \\
771 46 Olomouc \\
Czech Republic \\
helmut.laenger@tuwien.ac.at
\end{document}